\theoremstyle{plain}
\newtheorem{mainthm}{Theorem}
\newtheorem{theorem}{Theorem}[section]
\newtheorem{proposition}[theorem]{Proposition}
\newtheorem{lemma}[theorem]{Lemma}
\theoremstyle{definition}
\newtheorem*{remark}{Remark}
\newtheorem*{qs}{Questions}
\newtheorem*{problem}{Problem}
\newcommand{\Z}{\mathbb{Z}}
\newcommand{\N}{\mathbb{N}}
\newcommand{\R}{\mathbb{R}}
\newcommand{\eps}{\varepsilon}
\begin{document}

\begin{abstract}
We characterize product Anosov diffeomorphisms in terms of the two-sided limit shadowing property. It is proved that an Anosov diffeomorphism is a product Anosov diffeomorphism if and only if any lift to the universal covering has the unique two-sided limit shadowing property. Then we introduce two maps in a suitable Banach space such that fixed points of these maps are related with shadowing orbits on the universal covering.
\end{abstract}

\title[Anosov diffeomorphisms]{Product Anosov diffeomorphisms and the two-sided limit shadowing property}

\author[Bernardo Carvalho]{Bernardo Carvalho}

\date{\today}

\thanks{2010 \emph{Mathematics Subject Classification}: Primary 37D20; Secondary 37C20.}
\keywords{Anosov, Hyperbolicity, Transitivity.}
\thanks{This paper was partially supported by CNPq (Brazil).}

\address[Bernardo Carvalho]{Universidade Federal do Rio de Janeiro - UFRJ}
\email{bmcarvalho06@gmail.com}
\maketitle

\section{Introduction}

An \emph{Anosov diffeomorphism} on a smooth manifold $M$ is a smooth diffeomorphism $f:M\to M$ satisfying:
\begin{enumerate}
  \item for every $x\in M$ there is a splitting $T_xM=E^s(x)\oplus E^u(x)$ which is invariant under the derivative map $Df(x):T_xM\to T_{f(x)}M$, that is, $$Df(x)(E^s(x))=E^s(f(x)) \,\,\, \textrm{and} \,\,\, Df(x)(E^u(x))=E^u(f(x)).$$ $E^s(x)$ is called the \emph{stable space} of $x$ and $E^u(x)$ is called the \emph{unstable space} of $x$.
  \item there exist a Riemannian metric in $M$ and a constant $0<\lambda<1$ such that $$|Df^k(x)v|_{f^k(x)}\leq\lambda^k|v|_x \,\,\, \textrm{and} \,\,\, |Df^{-k}(x)w|_{f^{-k}(x)}\leq \lambda^k|w|_x$$ for all $v\in E^s(x)$, $w\in E^u(x)$, $k\in\Z$ and $x\in M$, where $|.|_x$ denotes the norm in $T_xM$ induced by the Riemannian metric. This metric is said to be \emph{adapted} to $f$.
\end{enumerate}

Such systems have been intensely studied since the work of S. Smale \cite{S} in 1966. He introduced several examples of Anosov diffeomorphisms and stated some questions about them that are not answered yet (to my best knowledge). The central problem on this theory is to understand all examples of Anosov diffeomorphisms (up to topological conjugacy). Smale conjectured that every Anosov diffeomorphism must be topologically conjugated to an Anosov automorphism on an infra-nilmanifold. The following properties are expected to be true:

\begin{enumerate}
  \item $M$ is an infra-nilmanifold and the universal covering is the Euclidean space,
  \item The lift of $f$ to $\R^n$ is topologically conjugated to a hyperbolic matrix,
  \item The stable and unstable foliations are on global product structure,
  \item $f$ admits a fixed point,
  \item If $M$ is compact and connected then $f$ is transitive.
\end{enumerate}

We recall some definitions. Denote by $d$ the distance in $M$ induced by the adapted metric and consider the sets $$W^s(x)=\{y\in M;d(f^k(y),f^k(x))\to0, k\to\infty\} \,\,\,\,\,\, \textrm{and}$$ $$W^u(x)=\{y\in M;d(f^k(y),f^k(x))\to0, k\to-\infty\}.$$ These sets are called the \emph{stable set} and the \emph{unstable set} of the point $x\in M$, respectively. For an Anosov diffeomorphism these sets are leafs of two respective foliations which we call \emph{stable foliation} and \emph{unstable foliation}.

The stable and unstable foliations are on \emph{global product structure} if for every $x,y\in M$ the leaves $\widetilde{W}^s(x)$ and $\widetilde{W}^u(y)$ intersect in exactly one point. Here $\widetilde{W}^s(x)$ and $\widetilde{W}^u(y)$ denote the lift of the stable and unstable leaves, respectively, to the universal covering. If this is the case we say that $f$ is a \emph{product Anosov diffeomorphism}. When $M$ is compact and connected the following three classes of diffeomorphisms are expected to be the same:

\begin{itemize}
  \item Anosov diffeomorphisms,
  \item Transitive Anosov diffeomorphisms,
  \item Product Anosov diffeomorphisms.
\end{itemize}

Product Anosov diffeomorphisms are transitive but the converse of this statement is not known. In \cite{C} transitive Anosov diffeomorphisms are characterized in terms of the \emph{two-sided limit shadowing property}. Recall that a homeomorphism $f$ on a metric space $(X,d)$ has the two-sided limit shadowing property if for every sequence $(x_k)_{k\in\Z}\subset X$ satisfying $$d(f(x_k),x_{k+1})\to0, \,\,\,\,\,\, |k|\to\infty$$ there exists $z\in X$ satisfying $$d(f^k(z),x_k)\to0, \,\,\,\,\,\, |k|\to\infty.$$ The sequence $(x_k)_{k\in\Z}$ is called a \emph{two-sided limit pseudo-orbit} and the point $z$ is said to \emph{two-sided limit shadows} $(x_k)_{k\in\Z}$ (see \cite{ACCO}, \cite{C} and \cite{CK} for more information on this property). The following is proved in \cite{C} but we state here since it is not stated as a theorem there:

\begin{theorem}
An Anosov diffeomorphism on a compact and connected manifold is transitive if and only if it has the two-sided limit shadowing property.
\end{theorem}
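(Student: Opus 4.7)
The plan is to prove both directions using the hyperbolic structure available to Anosov diffeomorphisms: the Lipschitz shadowing lemma, the exponential contraction on $E^s$ and expansion on $E^u$, the spectral decomposition of the non-wandering set $\Omega(f)$, and density of periodic points in each basic set.

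For the forward direction, let $(x_k)_{k\in\Z}$ be a two-sided limit pseudo-orbit with errors $\delta_k = d(f(x_k),x_{k+1})\to 0$ as $|k|\to\infty$. On a compact connected manifold a transitive Anosov diffeomorphism is topologically mixing and hence satisfies Bowen's specification property, which I would use to produce a shadow $z$ of $(x_k)$ by splicing orbit segments $\eps$-close to suitable windows $(x_k)_{|k|\leq N_n}$ and extracting a limit point. The key step is then to verify $d(f^k(z),x_k)\to 0$ as $|k|\to\infty$: writing the discrepancy at time $k$ in the Anosov splitting gives recursions $\|v_{k+1}^s\|\leq\lambda\|v_k^s\|+C\delta_k$ and an analogous one for $v^u_k$ iterated backward, and together with $\delta_k\to 0$ these force the stable component to decay at $+\infty$ and the unstable component at $-\infty$. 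The complementary decays follow because the shadow is a genuine $f$-orbit, so uniform boundedness of its iterates combined with hyperbolicity rules out any nonzero unstable part going forward and any nonzero stable part going backward.

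For the reverse direction I would argue by contrapositive. Suppose $f$ is Anosov and not transitive; then its spectral decomposition contains at least two distinct basic sets $\Omega_i\neq\Omega_j$. By density of periodic orbits, pick periodic $p\in\Omega_i$ and $q\in\Omega_j$, and define $x_k=f^k(p)$ for $k\leq 0$ and $x_k=f^k(q)$ for $k\geq 1$. All errors vanish except the single error $d(f(p),f(q))$ at $k=0$, so $(x_k)$ is trivially a two-sided limit pseudo-orbit. Two-sided limit shadowing would then produce $z\in W^u(p)\cap W^s(q)$. Exchanging the roles of $p$ and $q$ yields $W^u(q)\cap W^s(p)\neq\emptyset$, hence a heteroclinic cycle which is automatically transverse thanks to the Anosov splitting. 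By the $\lambda$-lemma and the Birkhoff--Smale theorem, such a cycle generates a transitive hyperbolic invariant set containing both $\Omega_i$ and $\Omega_j$, merging them into a single chain recurrence class and contradicting the spectral decomposition.

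The principal obstacle is the forward direction, specifically producing a single shadow point whose discrepancy with the pseudo-orbit vanishes asymptotically in both time directions. The shadow's stable component at time $k$ is determined by future errors while its unstable component is determined by past errors, so pointwise hyperbolicity delivers the decay on each side; however, fusing the forward- and backward-asymptotic behaviours into a single orbit is exactly what requires the global mixing (equivalently, transitivity) assumption, without which the putative shadow might only exist as two distinct pieces lying on incompatible invariant manifolds.
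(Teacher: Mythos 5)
First, note that the paper does not actually prove this statement: it is quoted from \cite{C} (``The following is proved in \cite{C} but we state here since it is not stated as a theorem there''), so there is no in-paper proof to match line by line. The closest internal model is the proof of Theorem A, whose skeleton is: limit-shadow the backward tail by $z_1$ and the forward tail by $z_2$ (Anosov diffeomorphisms have the limit shadowing property), then take $z\in W^u(z_1)\cap W^s(z_2)$. Your overall strategy is sound and your closing paragraph correctly identifies where transitivity enters, but your two directions are organized differently from that skeleton. For the reverse direction, your contrapositive via the spectral decomposition, heteroclinic cycles and the $\lambda$-lemma does work (a transverse heteroclinic cycle between periodic points of distinct basic sets makes them homoclinically related, merging the basic sets), but it is heavier than necessary and quietly uses the nontrivial fact that a non-transitive Anosov diffeomorphism has at least two basic sets. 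The same concatenated pseudo-orbit $(\dots,f^{-1}(x),x,y,f(y),\dots)$ applied \emph{directly} shows that two-sided limit shadowing forces $W^u(x)\cap W^s(y)\neq\emptyset$ for \emph{all} $x,y\in M$; by local product structure this makes every unstable leaf dense, hence $f$ transitive, with no spectral decomposition needed.

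The genuinely soft spot is the forward direction. As you observe, the two-sided limit pseudo-orbit is \emph{not} a $\delta$-pseudo-orbit for small $\delta$ --- the errors are only small for large $|k|$ --- so the Shadowing Lemma cannot be applied to the whole sequence, and your splicing construction has two gaps as written: (i) Bowen's specification property shadows finitely many \emph{genuine orbit segments} with prescribed time gaps, not pseudo-orbit windows $(x_k)_{|k|\leq N_n}$, so you must first replace the two tails by true orbits; and (ii) a limit point of orbits that $\eps$-shadow longer and longer windows is only guaranteed to $\eps$-shadow, and upgrading this to vanishing asymptotic error requires either a diagonal argument with $\eps_n\to0$ controlled by expansivity, or the hyperbolic discrepancy estimates you allude to but do not carry out. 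The clean repair is exactly the Theorem A argument: use the limit shadowing property (valid for any Anosov diffeomorphism, cf.\ \cite{P}) to produce $z_1$ with $d(f^k(z_1),x_k)\to0$ as $k\to-\infty$ and $z_2$ with $d(f^k(z_2),x_k)\to0$ as $k\to\infty$, then use transitivity (density of $W^u(z_1)$, or specification if you prefer) to find $z\in W^u(z_1)\cap W^s(z_2)$; this $z$ two-sided limit shadows. With that substitution your proof is correct.
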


Our first result characterizes product Anosov diffeomorphisms in terms of the two-sided limit shadowing property. We say that $f$ has the \emph{unique two-sided limit shadowing property} when every two-sided limit pseudo-orbit is two-sided limit shadowed by a \emph{single point}.

\begin{mainthm}\label{utsls}
An Anosov diffeomorphism is a product Anosov diffeomorphism if and only if any lift to the universal covering has the unique two-sided limit shadowing property.
\end{mainthm}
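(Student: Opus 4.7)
The plan is to prove both implications by identifying each side with a statement about intersections of stable and unstable leaves on the universal covering $\widetilde{M}$, using that for an Anosov lift $\tilde{f}$ the leaf $\widetilde{W}^s(x)$ coincides with $\{w\in\widetilde{M}: d(\tilde{f}^k(w),\tilde{f}^k(x))\to 0 \text{ as } k\to\infty\}$, and analogously for $\widetilde{W}^u$. For the ``only if'' direction (the easier one), fix $x,y\in\widetilde{M}$ and consider the two-sided limit pseudo-orbit obtained by concatenating the backward orbit of $y$ with the forward orbit of $x$: set $x_k=\tilde{f}^k(y)$ for $k\le 0$ and $x_k=\tilde{f}^k(x)$ for $k\ge 1$. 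Every discrepancy vanishes except the one at $k=0$, so this is trivially a two-sided limit pseudo-orbit. Its unique two-sided limit shadow $z$ must satisfy $z\in\widetilde{W}^s(x)\cap\widetilde{W}^u(y)$, and uniqueness of $z$ translates directly into the singleton intersection condition defining global product structure.

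For the ``if'' direction, uniqueness is immediate: if $w_1$ and $w_2$ both two-sided limit shadow the same pseudo-orbit, then $w_2\in\widetilde{W}^s(w_1)\cap\widetilde{W}^u(w_1)$, which equals $\{w_1\}$ by the product hypothesis applied with $x=y=w_1$. Existence is the heart of the matter. Given a two-sided limit pseudo-orbit $(x_k)\subset\widetilde{M}$, its projection $(\pi(x_k))$ is a two-sided limit pseudo-orbit in $M$ since $\pi$ is a local isometry, and Theorem~1 applied to $f$ (product Anosov implies transitive) supplies a shadow $z\in M$. By compactness of $M$ the injectivity radius is bounded below, so for $|k|\ge N$ large enough $f^k(z)$ admits a unique lift $y_k$ in a small ball around $x_k$; a continuity argument that uses $d(\tilde{f}(x_k),x_{k+1})\to 0$ together with the uniform lower bound on displacement of nontrivial deck transformations shows $\tilde{f}(y_k)=y_{k+1}$ for such $k$. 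This produces a ``future lift'' $\tilde{z}_+:=\tilde{f}^{-N}(y_N)$, satisfying $\tilde{f}^k(\tilde{z}_+)=y_k$ for $k\ge N$, and a ``past lift'' $\tilde{z}_-:=\tilde{f}^N(y_{-N})$, satisfying $\tilde{f}^k(\tilde{z}_-)=y_k$ for $k\le -N$; both are lifts of $z$, but they may differ by a nontrivial deck transformation.

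The final step invokes the product structure: let $w$ be the unique point of $\widetilde{W}^s(\tilde{z}_+)\cap\widetilde{W}^u(\tilde{z}_-)$. Then $d(\tilde{f}^k(w),y_k)\to 0$ as $k\to +\infty$ by definition of the stable leaf, and the triangle inequality with $d(y_k,x_k)\to 0$ yields $d(\tilde{f}^k(w),x_k)\to 0$; the past direction is symmetric. The principal obstacle I expect is precisely this gluing of the ``future lift'' and the ``past lift'': without global product structure there is no reason for a single lift of $z$ to shadow both tails of the pseudo-orbit simultaneously, and the product hypothesis is exactly what makes the gluing possible. A secondary technical point is verifying rigorously that $\tilde{f}(y_k)=y_{k+1}$ rather than a nontrivial deck translate thereof, which requires combining the local injectivity of $\pi$ with the discreteness of the deck group.
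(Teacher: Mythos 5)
Your proof is correct, and its core is the same as the paper's: reduce existence to producing one point that shadows the forward tail and one that shadows the backward tail, and then glue them with the global product structure by taking the unique point of $\widetilde{W}^s(\cdot)\cap\widetilde{W}^u(\cdot)$; uniqueness in both directions is handled just as you do. Where you diverge is in how the two one-sided shadows are produced. The paper uses the (one-sided) limit shadowing property of Anosov diffeomorphisms, which holds with no transitivity assumption, together with its Lemma 2.1 stating that limit shadowing passes to the universal covering; applying this to the forward and backward tails of $(x_k)$ directly yields the two points $z_1,z_2$ upstairs. You instead invoke Theorem 1 (two-sided limit shadowing for transitive Anosov diffeomorphisms on compact $M$) applied to the projected pseudo-orbit, which forces you to (a) justify that product Anosov implies transitive --- a fact the paper asserts but does not prove --- and (b) carry out by hand the two-tail lifting argument producing the possibly distinct lifts $\tilde{z}_+$ and $\tilde{z}_-$ of the single shadow $z$. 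Both routes are valid, and your lifting argument (unique nearby lift, deck-group discreteness forcing $\tilde{f}(y_k)=y_{k+1}$) is essentially the content of the paper's Lemma 2.1; but the paper's choice of the weaker, one-sided input is cleaner, since it avoids the detour through transitivity and the strictly stronger two-sided result, and it isolates the lifting step as a reusable lemma rather than an inline computation. Your observation that the product hypothesis is exactly what glues the two tails --- and that the two lifts of $z$ may differ by a deck transformation --- correctly identifies the crux of the theorem.
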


So the following three classes of diffeomorphisms are expected to be the same:

\begin{itemize}
  \item Anosov diffeomorphisms,
  \item Anosov diffeomorphisms with the two-sided limit shadowing property,
  \item Anosov diffeomorphisms with the unique two-sided limit shadowing property on the universal covering.
\end{itemize}

We note that the uniqueness of the shadowing point and also the uniqueness of the intersection between the lifted stable and unstable leaves is a problem appart from the existence of these points. Though it is an interesting problem, on this note we will focus on the existence of these points. Thus the following are central questions:

\begin{qs}\label{questions}
Is it true that any lift of an Anosov diffeomorphism to the universal covering has the two-sided limit shadowing property? Is this true if we suppose that the Anosov diffeomorphism has the two-sided limit shadowing property?
\end{qs}

Trying to answer these questions we introduce some new techniques: for any two-sided limit pseudo-orbit $(x_k)_{k\in\Z}$ in the universal covering we discuss two maps $F$ and $G$ from a suitable Banach space to itself (see sections 3 and 4 for precise definitions) such that fixed points of these maps are related with points that two-sided limit shadows $(x_k)_{k\in\Z}$.

\begin{mainthm}\label{.}
There exists a bijection between the set of points that two-sided limit shadows $(x_k)_{k\in\Z}$ and the set of fixed points of $F$ and $G$.
\end{mainthm}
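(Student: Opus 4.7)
The plan is to construct the bijection explicitly by viewing each shadowing point as a correction sequence in appropriate exponential charts, and verifying that the fixed point equations defining $F$ and $G$ express exactly the condition that this correction sequence trivializes the one-step error of the pseudo-orbit. Concretely, let $\pi:\wt{M}\to M$ be the universal covering and $\wt{f}$ a lift. Given $z\in\wt{M}$ that two-sided limit shadows $(x_k)_{k\in\Z}$, define $v_k:=\exp_{x_k}^{-1}(\wt{f}^{\,k}(z))$ (or simply $v_k:=\wt{f}^{\,k}(z)-x_k$ in the case $\wt{M}=\R^n$). By the shadowing hypothesis $d(\wt{f}^{\,k}(z),x_k)\to 0$ as $|k|\to\y$, so the sequence $v=(v_k)$ belongs to the Banach space (presumably of sequences vanishing at $\pm\y$, equipped with the sup norm) on which $F$ and $G$ are defined.

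I would then verify by direct substitution that $v$ solves both $F(v)=v$ and $G(v)=v$. The maps $F$ and $G$ are built via the stable/unstable Lyapunov-Perron type decomposition along the pseudo-orbit, so that the fixed point equations are equivalent to the orbit relation $\wt{f}(\exp_{x_k}(v_k))=\exp_{x_{k+1}}(v_{k+1})$; this orbit relation is immediate from the definition of $v_k$. Conversely, given any fixed point $v$ of $F$ (or $G$), set $z:=\exp_{x_0}(v_0)$. The fixed point identity propagates inductively in both time directions to yield $\wt{f}^{\,k}(z)=\exp_{x_k}(v_k)$ for all $k\in\Z$, and because $v_k\to 0$ at infinity the point $z$ two-sided limit shadows $(x_k)$. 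The two assignments $z\mapsto v$ and $v\mapsto z=\exp_{x_0}(v_0)$ are mutually inverse by construction, which gives the bijection.

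The main obstacle is to confirm that the translation between the orbit condition and the fixed point equations of $F$ and $G$ is exact, not merely an implication. One direction (shadowing point $\Rightarrow$ fixed point) is essentially a substitution; the subtle direction is (fixed point $\Rightarrow$ shadowing point), where one must ensure that solving the Lyapunov-Perron equation in the space of vanishing sequences imposes both the orbit relation and the correct boundary condition at $\pm\y$, with no spurious solutions arising from the stable/unstable reorganization. This is exactly the role of the stable/unstable hyperbolic splittings along $(x_k)$, and the verification reduces to careful bookkeeping of how the hyperbolic projectors combine with the exponential charts in the definitions of $F$ and $G$ in sections 3 and 4.
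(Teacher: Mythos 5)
Your argument is essentially the paper's: Theorem \ref{teorema homeo} constructs exactly the correspondence $z\mapsto\bigl(\exp_{x_k}^{-1}(f^k(z))\bigr)_{k\in\Z}$ and $\bar v\mapsto\exp_{x_0}(v_0)$ between shadowing points and fixed points of $F$, by the same substitution and two-sided induction you describe. The one step you leave vague --- why $G$ has the same fixed points as $F$ --- is handled in the paper not by ``direct substitution'' into $G$ but by the algebraic remark that $G=(Id-T)^{-1}\circ(F-T)$, so $G(\bar v)=\bar v$ if and only if $(F-T)(\bar v)=(Id-T)(\bar v)$, i.e.\ $F(\bar v)=\bar v$; this is precisely where the invertibility of $Id-T$ (Theorem \ref{bolado}) is used.
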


In section $2$ we prove Theorem \ref{utsls}, in section $3$ we introduce the map $F$ and in section $4$ we discuss the second map $G$.

\section{Lifting shadowing properties}

We denote by $M$ a closed and connected smooth $n$-dimensional manifold, by $\widetilde{M}$ its universal covering and by $\pi:\widetilde{M}\to M$ the covering projection. Any Riemannian metric in $M$ can be lifted to a Riemannian metric in $\widetilde{M}$ so as the covering map $\pi$ is a local isometry. Thus there exists $\eps_0>0$ such that for each $x\in\widetilde{M}$, $\pi$ maps the $\eps_0$-neighborhood of $x$ isometrically to the $\eps_0$-neighborhood of $\pi(x)$. We denote by $d$ the distance in $M$ induced by the Riemannian metric and by $\tilde{d}$ the distance in $\widetilde{M}$ induced by the lifted metric.

In \cite{H} (and also in \cite{KP}) it is proved that a homeomorphism $f:M\to M$ has the \emph{shadowing property} if and only if any lift of $f$ to the universal covering also has it. We prove that this holds for the \emph{limit shadowing property}. First we make all definitions. For any number $\delta>0$ we say that a sequence $(x_k)_{k\in\Z}\subset M$ is a \emph{$\delta$-pseudo-orbit} if $$d(f(x_k),x_{k+1})<\delta, \,\,\,\,\,\, k\in\Z.$$ This sequence is \emph{$\eps$-shadowed} by a point $z\in M$ if $$d(f^k(z),x_k)<\eps, \,\,\,\,\,\, k\in\Z.$$ We say that $f$ has the shadowing property if for every $\eps>0$ there exists $\delta>0$ such that every $\delta$-pseudo-orbit is $\eps$-shadowed. A sequence $(x_k)_{k\in\N}\subset M$ is a \emph{limit pseudo-orbit} if $$d(f(x_k),x_{k+1})\to0, \,\,\,\,\,\, k\to\infty.$$ This sequence is \emph{limit shadowed} if there exists $z\in M$ such that $$d(f^k(z),x_k)\to0, \,\,\,\,\,\, k\to\infty.$$ We say that $f$ has the limit shadowing property if every limit pseudo-orbit is limit shadowed. Information about these properties can be found in \cite{P}.

\begin{lemma}
If $f:M\to M$ is a homeomorphism and $\tilde{f}:\widetilde{M}\to\widetilde{M}$ is any lift of $f$ to the universal covering then $\tilde{f}$ has the limit shadowing property if and only if $f$ also has it.
\end{lemma}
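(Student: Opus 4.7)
The plan is to go back and forth between $M$ and $\widetilde{M}$ using the $\eps_0$-injectivity of $\pi$, which gives an isometric identification between each $\eps_0$-ball in $\widetilde{M}$ and its image in $M$. Because limit shadowing concerns only the tail of a sequence, I can afford to make a one-off choice after a threshold $N$ beyond which all relevant distances drop below $\eps_0$.

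For the easy direction, assume $\tilde{f}$ has the limit shadowing property and take a limit pseudo-orbit $(x_k)_{k\in\N}\subset M$. Pick $N$ with $d(f(x_k),x_{k+1})<\eps_0$ for $k\geq N$, choose any $\tilde{x}_N\in\pi^{-1}(x_N)$, and recursively define $\tilde{x}_{k+1}$ to be the unique lift of $x_{k+1}$ inside the $\eps_0$-ball around $\tilde{f}(\tilde{x}_k)$; fill in $\tilde{x}_k$ for $k<N$ with arbitrary lifts. The local isometry gives $\tilde{d}(\tilde{f}(\tilde{x}_k),\tilde{x}_{k+1})=d(f(x_k),x_{k+1})\to 0$, so $(\tilde{x}_k)$ is a limit pseudo-orbit for $\tilde{f}$. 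The hypothesis furnishes $\tilde{z}$ limit shadowing it, and projecting down yields $d(f^k(\pi(\tilde{z})),x_k)\leq \tilde{d}(\tilde{f}^k(\tilde{z}),\tilde{x}_k)\to 0$.

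The converse is more delicate. Given a limit pseudo-orbit $(\tilde{x}_k)\subset\widetilde{M}$ for $\tilde{f}$, its projection $x_k:=\pi(\tilde{x}_k)$ is a limit pseudo-orbit for $f$ because $\pi$ contracts distances. The hypothesis gives $z\in M$ with $d(f^k(z),x_k)\to 0$. To lift $z$, I fix $N$ large enough that $d(f^k(z),x_k)$ and $\tilde{d}(\tilde{f}(\tilde{x}_k),\tilde{x}_{k+1})$ are each below $\eps_0/10$ for $k\geq N$ and that the modulus of continuity of $\tilde{f}$ on the $\eps_0$-scale sends $(\eps_0/10)$-close points to $(\eps_0/10)$-close points. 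Letting $\tilde{z}_N$ be the unique lift of $f^N(z)$ inside the $\eps_0$-ball of $\tilde{x}_N$, I set $\tilde{z}:=\tilde{f}^{-N}(\tilde{z}_N)$.

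The main obstacle is the inductive claim that $\tilde{d}(\tilde{f}^k(\tilde{z}),\tilde{x}_k)<\eps_0$ for every $k\geq N$; once this is in hand, $\eps_0$-injectivity of $\pi$ upgrades the downstairs convergence to $\tilde{d}(\tilde{f}^k(\tilde{z}),\tilde{x}_k)=d(f^k(z),x_k)\to 0$, which is the desired conclusion. The induction step reads
\begin{equation*}
\tilde{d}(\tilde{f}^{k+1}(\tilde{z}),\tilde{x}_{k+1})\leq \tilde{d}\bigl(\tilde{f}(\tilde{f}^k(\tilde{z})),\tilde{f}(\tilde{x}_k)\bigr)+\tilde{d}(\tilde{f}(\tilde{x}_k),\tilde{x}_{k+1}),
\end{equation*}
where the first term is handled by uniform continuity of $\tilde{f}$ applied to the inductive hypothesis and the second by the choice of $N$. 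The uniform continuity of $\tilde{f}$ on small scales, if not taken for granted, can be extracted from uniform continuity of $f$ on the compact manifold $M$ together with the local isometry of $\pi$ on $\eps_0$-balls.
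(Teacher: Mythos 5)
Your first direction is correct and is essentially the paper's argument. The gap is in the converse. Your inductive claim that $\tilde{d}(\tilde{f}^k(\tilde{z}),\tilde{x}_k)<\eps_0$ for all $k\geq N$ does not close: the first term of your triangle inequality, $\tilde{d}(\tilde{f}(\tilde{f}^k(\tilde{z})),\tilde{f}(\tilde{x}_k))$, is controlled only by the modulus of continuity $\omega$ of $\tilde{f}$ applied to the inductive bound, and in general $\omega(\delta)\geq\delta$. Your proposed fix --- choosing things so that ``$\tilde{f}$ sends $(\eps_0/10)$-close points to $(\eps_0/10)$-close points'' --- is not something you can arrange by enlarging $N$, and it is simply false for the maps this paper cares about: a lift of an Anosov diffeomorphism expands distances along the unstable direction by a definite factor $\lambda^{-1}>1$ at every scale, so the recursion $\delta_{k+1}\leq\lambda^{-1}\delta_k+e_k$ lets the error grow geometrically and the orbit of your single chosen lift $\tilde{z}$ has no reason to stay in the $\eps_0$-balls.

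The repair is to stop propagating a bound along the orbit of one point and instead use the a priori global information you already have downstairs. For each $k$ large enough that $d(f^k(z),\pi(\tilde{x}_k))<\eps_0$, let $w_k$ be the unique point of $\pi^{-1}(f^k(z))$ in the $\eps_0$-ball around $\tilde{x}_k$; by construction $\tilde{d}(w_k,\tilde{x}_k)=d(f^k(z),\pi(\tilde{x}_k))\to0$, with no induction needed. What must be checked is only that the $w_k$ form a single orbit, i.e.\ $\tilde{f}(w_k)=w_{k+1}$ eventually. Since $\tilde{f}(w_k)$ is a lift of $f^{k+1}(z)$ and
\begin{equation*}
\tilde{d}(\tilde{f}(w_k),\tilde{x}_{k+1})\leq\omega\bigl(\tilde{d}(w_k,\tilde{x}_k)\bigr)+\tilde{d}(\tilde{f}(\tilde{x}_k),\tilde{x}_{k+1})\longrightarrow0,
\end{equation*}
this quantity is eventually below $\eps_0$, which by uniqueness of the lift in that ball forces $\tilde{f}(w_k)=w_{k+1}$. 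Here uniform continuity is applied to a quantity already known to tend to zero, rather than to one whose smallness the induction is trying to establish; that is the essential difference. (This is also the content of the step the paper dismisses as ``easy to check,'' so you were right to see that something needed proving there --- but the argument you supplied for it is the one that fails.)
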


\begin{proof}
Suppose that $\tilde{f}$ has the limit shadowing property and consider $(x_k)_{k\in\N}\subset M$ a limit pseudo-orbit for $f$. Choose $N\in\N$ such that $$d(f(x_k),x_{k+1})<\eps_0, \,\,\,\,\,\, k\geq N.$$ Note that from the definition of $\eps_0$ we have $$\eps_0<\min\{\tilde{d}(\tilde{x},\tilde{y}); x\in M, \tilde{x},\tilde{y}\in\pi^{-1}(x), \tilde{x}\neq\tilde{y}\}.$$ Thus for each choice of $y_N\in\pi^{-1}(x_N)$ there exists a unique limit pseudo-orbit $(y_k)_{k\geq N}$ of $\tilde{f}$ such that $y_k\in\pi^{-1}(x_k)$ and $d(f(y_k),y_{k+1})<\eps_0$ for every $k\geq N$. Since $\tilde{f}$ has limit shadowing there exists $z\in\widetilde{M}$ that limit shadows $(y_k)_{k\geq N}$. Therefore $\pi(\tilde{f}^{-N}(z))$ limit shadows $(x_k)_{k\in\N}$. Since this holds for every limit pseudo-orbit it follows that $f$ has the limit shadowing property.

Now suppose that $f$ has the limit shadowing property and consider $(x_k)_{k\in\N}\subset\widetilde{M}$ a limit pseudo-orbit for $\tilde{f}$. The sequence $(\pi(x_k))_{k\in\N}\subset M$ is a limit pseudo-orbit for $f$ and thus it is limit shadowed by $z\in M$. Choose $K\in\N$ such that $$d(f^k(z),\pi(x_k))<\eps_0, \,\,\,\,\,\, k\geq K.$$ There is a unique point $\tilde{z}\in\pi^{-1}(f^K(z))$ such that $\tilde{d}(\tilde{z},x_K)<\eps_0$. It is easy to check then that $\tilde{f}^{-K}(\tilde{z})$ limit shadows $(x_k)_{k\in\N}$. Since this holds for every limit pseudo-orbit it follows that $\tilde{f}$ has the limit shadowing property.
\end{proof}

\hspace{-0.4cm}\textbf{Proof of Theorem \ref{utsls}:} It is quite obvious that if the lift of $f$ to the universal covering has the unique two-sided limit shadowing property then $f$ is a product Anosov diffeomorphism. Then we only need to prove the other direction. Suppose that $f$ is a product Anosov diffeomorphism, $\tilde{f}$ is any lift of $f$ to the universal covering and $(x_k)_{k\in\Z}$ is any two-sided limit pseudo-orbit of $\tilde{f}$. Since $f$ is an Anosov diffeomorphism it has the limit shadowing property (see \cite{P} for a proof) and the previous lemma assures that $\tilde{f}$ also has it.  So there exist two points $z_1,z_2\in\widetilde{M}$ satisfying $$\tilde{d}(\tilde{f}^k(z_1),x_k)\to0, \,\,\,\,\,\, k\to-\infty \,\,\,\,\,\, \textrm{and} \,\,\,\,\,\, \tilde{d}(\tilde{f}^k(z_2),x_k)\to0, \,\,\,\,\,\, k\to\infty.$$ Since $f$ is a product Anosov diffeomorphism there exists a unique point $z\in W^u(z_1)\cap W^s(z_2)$. It is easy to see that $z$ two-sided limit shadows $(x_k)_{k\in\Z}$. Moreover, this point is unique because if a point two-sided limit shadows $(x_k)_{k\in\Z}$ then it belongs to $W^u(z_1)\cap W^s(z_2)$ which is a singleton. \qed

\begin{remark}
For an Anosov diffeomorphism $f:M\to M$ it is proved in \cite{C} that the two-sided limit shadowing property holds when the specification property is present. We note that the specification property does not lift to the universal covering though, it only makes sense on the compact scenario.
\end{remark}

\section{The general scenario}\label{B}

From now on we introduce some ideas on how to answer the questions made in introduction. Since we will work on the universal covering, and we do not want to make the notation too heavy, we will denote by $f_0:M\to M$ a homeomorphism and by $f:\widetilde{M}\to\widetilde{M}$ any lift of $f_0$ to the universal covering and not by $f$ and $\tilde{f}$ as we did in the previous section. We will also denote by $d$ the distance in $\widetilde{M}$ induced by the lifted metric. For every two-sided limit pseudo-orbit $(x_k)_{k\in\Z}\subset\widetilde{M}$ of $f$ we introduce a Banach space and a map $F$ from this space to itself such that fixed points of $F$ are in a bijective relation with the set of points that two-sided limit shadows $(x_k)_{k\in\Z}$. These ideas are based on the proof of Shadowing Lemma that can be found in \cite{P}.

We denote by $C$ the set of all bilateral sequences $\bar{v}=(v_k)_{k\in\Z}$ where $v_k\in T_{x_k}\widetilde{M}$ for each $k\in\Z$. Let $B$ denote the subset of $C$ consisting of bounded sequences, i.e., sequences $(v_k)_{k\in\Z}\in C$ that satisfy $$\sup_{k\in\Z}|v_k|_{x_k}<\infty,$$ where $|.|_{x_k}$ is the norm in $T_{x_k}\widetilde{M}$ induced by the lifted metric. The map $||.||:B\to\R^{+}$ defined by $$||\bar{v}||=\sup_{k\in\Z}|v_k|_{x_k}$$ is a norm in $B$ that makes $(B;||.||)$ a Banach space. We consider the subspace $C_0$ of $B$ as the space of sequences $(v_k)_{k\in\Z}\in B$ that satisfy $$|v_k|_{x_k}\to0, \,\,\, |k|\to\infty.$$ It is easy to see that $C_0$ is a closed subspace of $B$ with respect to the norm defined above, so it is also a Banach space. We define a map $F:C_0\to C_0$ as follows: for each sequence $\bar{v}=(v_k)_{k\in\Z}\in C_0$ we define $F(\bar{v})$ as the sequence $$F(\bar{v})_k=\exp^{-1}_{x_k}\circ f\circ\exp_{x_{k-1}}(v_{k-1}), \,\,\,\,\,\, k\in\Z.$$

\begin{remark}
If we suppose that the ambient manifold has non-positive sectional curvature then the lifted metric has no conjugated points and the exponential map $\exp_x:T_x\widetilde{M}\to\widetilde{M}$ is a global diffeomorphism for every $x\in\widetilde{M}$. In this case $F$ is well defined. This puts a restriction on the ambient manifold but it seems to be no problem to us since it is expected that the universal covering of a manifold supporting an Anosov diffeomorphism is the Euclidean space.
\end{remark}

\begin{lemma}
If $\bar{v}\in C_0$ then $F(\bar{v})\in C_0$.
\end{lemma}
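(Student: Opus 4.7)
The plan is to verify the two membership conditions of $C_0$ for the sequence $F(\bar{v}) = (F(\bar{v})_k)_{k\in\Z}$: boundedness (so that $F(\bar{v})\in B$) and $|F(\bar{v})_k|_{x_k} \to 0$ as $|k|\to\infty$. The first observation to make is that the boundedness condition follows for free from the limit condition, so the real task is the latter.

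Under the standing assumption of nonpositive sectional curvature discussed in the preceding remark, $\exp_{x_k} : T_{x_k}\widetilde{M} \to \widetilde{M}$ is a global diffeomorphism whose inverse is distance-realizing; that is, $|\exp_{x_k}^{-1}(y)|_{x_k} = d(x_k, y)$ for every $y\in\widetilde{M}$. Hence
\[
|F(\bar{v})_k|_{x_k} = d\bigl(x_k,\, f(\exp_{x_{k-1}}(v_{k-1}))\bigr),
\]
and the triangle inequality gives
\[
|F(\bar{v})_k|_{x_k} \leq d(x_k, f(x_{k-1})) + d\bigl(f(x_{k-1}),\, f(\exp_{x_{k-1}}(v_{k-1}))\bigr).
\]
The first term tends to $0$ as $|k|\to\infty$ directly from the definition of a two-sided limit pseudo-orbit. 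For the second term, $d(x_{k-1}, \exp_{x_{k-1}}(v_{k-1})) = |v_{k-1}|_{x_{k-1}} \to 0$ because $\bar{v}\in C_0$, so invoking uniform continuity of $f$ on $\widetilde{M}$ forces this second term to vanish as well.

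A small auxiliary fact worth recording is the uniform continuity of $f$: since $f$ is a lift of the homeomorphism $f_0$ of the compact manifold $M$, and the deck transformation group acts by isometries commuting with $f$, the uniform continuity of $f_0$ on $M$ lifts to uniform continuity of $f$ on $\widetilde{M}$ with respect to $d$.

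Finally, with $|F(\bar{v})_k|_{x_k}\to 0$ in hand, boundedness is automatic: choose $N$ so large that $|F(\bar{v})_k|_{x_k} < 1$ for $|k| \geq N$, and bound the supremum over $k\in\Z$ by the maximum of $1$ together with the finitely many remaining values. This places $F(\bar{v})$ in $C_0$. I do not anticipate any genuine obstacle here: the proof is a single triangle inequality supplemented by uniform continuity, with the three hypotheses (pseudo-orbit decay, $\bar{v}\in C_0$, nonpositive curvature) each used exactly once.
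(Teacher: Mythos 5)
Your proof is correct and follows essentially the same route as the paper: the same triangle inequality splitting $|F(\bar{v})_k|_{x_k}$ into a pseudo-orbit term and a term controlled by uniform continuity of $f$ applied to $|v_{k-1}|_{x_{k-1}}\to 0$. The additional remarks on why $\exp_{x_k}^{-1}$ realizes distance and why $f$ is uniformly continuous are welcome elaborations of points the paper leaves implicit.
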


\begin{proof}
A standard compactness argument (which we omit here) proves that $f$ is uniformly continuous on $\widetilde{M}$. Thus for each $\eps>0$ we can choose $0<\delta<\frac{\eps}{2}$ such that $d(x,y)<\delta$ implies $d(f(x),f(y))<\frac{\eps}{2}$. Since $\bar{v}\in C_0$ and $(x_k)_{k\in\Z}$ is a two-sided limit pseudo-orbit of $f$ we can choose $K\in\N$ such that for $|k|\geq K$ we have $$|v_k|_{x_k}<\delta \,\,\,\,\,\, \textrm{and} \,\,\,\,\,\, d(f(x_k),x_{k+1})<\delta.$$ Thus for $|k|>K$ we have $$d(\exp_{x_{k-1}}(v_{k-1}),x_{k-1})=|v_{k-1}|_{x_{k-1}}<\delta$$ which imply
\begin{eqnarray*}|F(\bar{v})_k|_{x_k}&=&|\exp^{-1}_{x_k}\circ f\circ\exp_{x_{k-1}}(v_{k-1})|_{x_k} \\
&=&d(f(\exp_{x_{k-1}}(v_{k-1})),x_k) \\
&\leq&d(f(\exp_{x_{k-1}}(v_{k-1})),f(x_{k-1}))+d(f(x_{k-1}),x_k) \\
&<&\frac{\eps}{2}+\frac{\eps}{2} \\
&=&\eps.
\end{eqnarray*}
This is enough to prove that $F(\bar{v})\in C_0$.
\end{proof}

\begin{theorem}\label{teorema homeo}
There exists a bijection between the set of fixed points of $F$ and the set of points that two-sided limit shadows $(x_k)_{k\in\Z}$.
\end{theorem}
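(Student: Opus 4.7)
The plan is to exhibit an explicit bijection $\Phi$ from the fixed point set of $F$ to the set of two-sided limit shadowing points of $(x_k)_{k\in\Z}$, namely $\Phi(\bar{v}) = \exp_{x_0}(v_0)$. The whole proof rests on the observation that the fixed point equation $F(\bar{v}) = \bar{v}$ reads, componentwise,
\begin{equation*}
v_k = \exp^{-1}_{x_k} \circ f \circ \exp_{x_{k-1}}(v_{k-1}), \quad k \in \Z,
\end{equation*}
which, after applying $\exp_{x_k}$ on both sides, is equivalent to $\exp_{x_k}(v_k) = f(\exp_{x_{k-1}}(v_{k-1}))$. In other words, if we set $z_k = \exp_{x_k}(v_k)$, then $(z_k)_{k\in\Z}$ is an honest $f$-orbit, i.e., $z_k = f^k(z_0)$ for every $k\in\Z$.

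First I would verify that $\Phi$ lands in the correct set. Given a fixed point $\bar{v}\in C_0$, set $z = z_0 = \exp_{x_0}(v_0)$. By the observation above, $f^k(z) = \exp_{x_k}(v_k)$, hence
\begin{equation*}
d(f^k(z), x_k) = d(\exp_{x_k}(v_k), x_k) = |v_k|_{x_k} \to 0, \quad |k|\to\infty,
\end{equation*}
where the last step uses $\bar{v}\in C_0$. Thus $\Phi(\bar{v})$ two-sided limit shadows $(x_k)_{k\in\Z}$.

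Next I would build the inverse. Given a shadowing point $z\in\widetilde M$, the remark following the definition of $F$ lets me use that $\exp_{x_k}$ is a global diffeomorphism, so the sequence $v_k := \exp^{-1}_{x_k}(f^k(z))$ is well defined for all $k$. It is bounded because $|v_k|_{x_k} = d(f^k(z), x_k)$ and this quantity tends to $0$; in particular $\bar{v}\in C_0$. The same relation $\exp_{x_k}(v_k) = f^k(z) = f(\exp_{x_{k-1}}(v_{k-1}))$ gives $F(\bar{v}) = \bar{v}$. The assignment $\Psi(z) = \bar{v}$ is clearly a two-sided inverse of $\Phi$: one composition is $\Phi(\Psi(z)) = \exp_{x_0}(\exp^{-1}_{x_0}(z)) = z$, and the other is $\Psi(\Phi(\bar{v}))_k = \exp^{-1}_{x_k}(f^k(\exp_{x_0}(v_0))) = \exp^{-1}_{x_k}(\exp_{x_k}(v_k)) = v_k$ using again that each $\exp_{x_k}$ is a diffeomorphism.

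There is no real obstacle in the argument; the only delicate point is the global invertibility of the exponential maps $\exp_{x_k}$, which is precisely what the non-positive curvature hypothesis mentioned in the remark supplies. Without that, one would have to restrict the domain of $F$ to tangent sequences whose norms are below the injectivity radius, which in turn would only recover shadowing points that stay uniformly close to $(x_k)_{k\in\Z}$ rather than all two-sided limit shadowing points.
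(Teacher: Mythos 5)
Your proposal is correct and follows essentially the same route as the paper: both directions use the observation that the fixed-point equation $v_k=\exp^{-1}_{x_k}\circ f\circ\exp_{x_{k-1}}(v_{k-1})$ turns $(\exp_{x_k}(v_k))_{k\in\Z}$ into a genuine orbit, with the forward map $\bar{v}\mapsto\exp_{x_0}(v_0)$ and inverse $z\mapsto(\exp^{-1}_{x_k}(f^k(z)))_{k\in\Z}$. Your closing caveat about the global invertibility of $\exp_{x_k}$ matches the paper's own remark on non-positive curvature.
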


\begin{proof}
For a two-sided limit pseudo-orbit $(x_k)_{k\in\Z}$ of $f$ suppose that $\bar{v}$ is a fixed point of $F$. Then the sequence $(\exp_{x_k}(v_k))_{k\in\Z}$ is an orbit that two-sided limit shadows $(x_k)_{k\in\Z}$. Indeed, for each $k\in\Z$ we have $$v_k=F(\bar{v})_k=\exp^{-1}_{x_k}\circ f\circ\exp_{x_{k-1}}(v_{k-1}),$$ which implies $$\exp_{x_k}(v_k)=f\circ\exp_{x_{k-1}}(v_{k-1}).$$ By induction we obtain $$\exp_{x_k}(v_k)=f^k(\exp_{x_0}(v_0)), \,\,\,\,\,\, k\in\Z.$$ Therefore, $$d(f^k(\exp_{x_0}(v_0)),x_k)=d(\exp_{x_k}(v_k),x_k)=|v_k|_{x_k}\to0, \,\,\,\,\,\, |k|\to\infty,$$ that is, $\exp_{x_0}(v_0)$ two-sided limit shadows $(x_k)_{k\in\Z}$.

Now suppose that $z$ two-sided limit shadows $(x_k)_{k\in\Z}$. For each $k\in\Z$ let $$v_k=\exp^{-1}_{x_k}(f^k(z)).$$ We have $\bar{v}=(v_k)_{k\in\Z}\in C_0$ since $$|v_k|_{x_k}=d(\exp_{x_k}(v_k),x_k)=d(f^k(z),x_k)\to0, \,\,\,\,\,\, |k|\to\infty.$$ Moreover, for each $k\in\Z$ $$F(\bar{v})_k=\exp^{-1}_{x_k}\circ f\circ\exp_{x_{k-1}}(v_{k-1})=\exp^{-1}_{x_k}(f^k(z))=v_k,$$ which proves $\bar{v}$ is a fixed point of $F$. These arguments construct the desired bijection.
\end{proof}

\begin{remark}
It is important to note that the Banach space $C_0$ and also the map $F$ depend on the two-sided limit pseudo-orbit $(x_k)_{k\in\Z}$.
\end{remark}

\section{The Anosov scenario}

The map $F$ can be defined for any homeomorphism $f_0:M\to M$, any lift $f:\widetilde{M}\to\widetilde{M}$ and any two-sided limit pseudo-orbit $(x_k)_{k\in\Z}$ of $f$. It is not expected that $F$ admit fixed points in all cases though but it is when $f_0$ is an Anosov diffeomorphism. On this section we use the hyperbolic structure of an Anosov diffeomorphism to define a new map $G$ in $C_0$ with more structure that $F$ but with the same fixed points.

We begin lifting the differentiable structure of $M$ to a differentiable structure of $\widetilde{M}$ such that $\pi$ is a local diffeomorphism. In particular if $\pi(x)=x_0$ then the derivative map $D\pi(x):T_x\widetilde{M}\to T_{x_0}M$ is a linear isomorphism. So the splitting $T_{x_0}M=E^s(x_0)\oplus E^u(x_0)$ can be lifted to a splitting $T_x\widetilde{M}=E^s(x)\oplus E^u(x)$ that is invariant by $Df(x)$. One can also lift the adapted metric in $M$ to a Riemannian metric in $\widetilde{M}$ so that $f$ is an Anosov diffeomorphism and that the lifted metric is adapted to $f$.

For each $k\in\Z$ consider a linear isomorphism $I_k:T_{f(x_{k-1})}\widetilde{M}\to T_{x_k}\widetilde{M}$ satisfying
\begin{enumerate}
\item $I_k(E^s(f(x_{k-1})))=E^s(x_k),$
\item $I_k(E^u(f(x_{k-1})))=E^u(x_k),$
\item $|I_k(v)|_{x_k}\leq|v|_{f(x_{k-1})}$.
\end{enumerate}
Define a map $T:C_0\to C_0$ by $$T(\bar{v})_k=I_k\circ Df(x_{k-1})(v_{k-1}), \,\,\,\,\,\, k\in\Z.$$ Let $Id$ denote the identity map in $C_0$.

\begin{theorem} \label{bolado}
The map $Id-T$ is a bounded linear isomorphism in $C_0$ with bounded inverse $(Id-T)^{-1}$.
\end{theorem}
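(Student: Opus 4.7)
The plan is to split $C_0$ along the hyperbolic bundles and to invert $Id - T$ direction-by-direction using Neumann series. I would introduce
\[
C_0^\sigma = \bigl\{ \bar v \in C_0 : v_k \in E^\sigma(x_k) \text{ for all } k \in \Z\bigr\}, \qquad \sigma \in \{s,u\},
\]
and first check that $C_0 = C_0^s \oplus C_0^u$ is a topological direct sum. Since the angle between $E^s(x)$ and $E^u(x)$ is uniformly bounded below on the compact $M$ and this bound lifts to $\widetilde M$, the pointwise projections along the subbundles induce bounded projections $C_0 \to C_0^\sigma$, yielding the decomposition. Properties (1) and (2) of $I_k$ together with the $Df$-invariance of $E^s$ and $E^u$ make $T$ preserve this splitting, so the theorem reduces to showing that $Id - T$ restricts to a bounded isomorphism on each summand.

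On the stable summand, property (3) of $I_k$ and the stable contraction of the adapted metric give
\[
|T(\bar v)_k|_{x_k} \leq |Df(x_{k-1}) v_{k-1}|_{f(x_{k-1})} \leq \lambda |v_{k-1}|_{x_{k-1}}
\]
for $\bar v \in C_0^s$, so $\|T|_{C_0^s}\| \leq \lambda < 1$. A standard Neumann series then inverts $Id - T|_{C_0^s}$ with operator norm at most $(1 - \lambda)^{-1}$.

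On the unstable summand, $T|_{C_0^u}$ should be a bijection with inverse
\[
(T|_{C_0^u})^{-1}(\bar w)_{k-1} = Df(x_{k-1})^{-1} I_k^{-1}(w_k).
\]
Using $\|Df(x)^{-1}|_{E^u(x)}\| \leq \lambda$ together with the observation that one can construct each $I_k$ so that it restricts to an isometry on the unstable subbundle (a choice consistent with conditions (1)--(3)), I would obtain $\|(T|_{C_0^u})^{-1}\| \leq \lambda$. The factorization
\[
Id - T|_{C_0^u} = -T|_{C_0^u}\bigl(Id - (T|_{C_0^u})^{-1}\bigr),
\]
combined with a Neumann series for the second factor, then exhibits $Id - T|_{C_0^u}$ as a bounded isomorphism. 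Assembling the two summands via the bounded projections gives the bounded linear isomorphism $Id - T$ on $C_0$, and the open mapping theorem ensures the inverse is bounded.

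The main obstacle I expect is the unstable direction: the stated non-expansion in (3) alone does not prevent $I_k^{-1}$ from enlarging unstable vectors, which would destroy the contraction of $(T|_{C_0^u})^{-1}$. The remedy I have in mind is to commit up front to constructing each $I_k$ as an isometry on each invariant subbundle separately; once this is arranged, the rest of the argument reduces to routine Neumann series and hyperbolic estimates.
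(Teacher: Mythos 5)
Your proposal is correct and, after unwinding, produces exactly the same inverse operator as the paper, but it gets there by a more structural route. The paper writes the inverse down explicitly, namely $\mathcal{G}(\bar v)_k=\pi^s_k(v_k)+\sum_{n=-\infty}^{k-1}A_{k-1}\circ\dots\circ A_n\pi^s_n(v_n)-\sum_{n=k}^{+\infty}A_k^{-1}\circ\dots\circ A_n^{-1}\pi^u_{n+1}(v_{n+1})$, checks that it preserves $C_0$ (Proposition \ref{c0}), verifies the right-inverse identity by a telescoping computation (Lemma \ref{imp}), and proves injectivity separately by showing that a $T$-fixed sequence tending to $0$ at $\pm\infty$ must vanish. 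You instead split $C_0=C_0^s\oplus C_0^u$ using the uniformly bounded projections (the paper's constant $N$ plays the same role), observe that $T$ preserves the splitting, and invert $Id-T$ by a forward Neumann series on $C_0^s$ and a backward one on $C_0^u$ via the factorization $Id-T=-T(Id-T^{-1})$; expanding these series term by term recovers precisely the two sums in $\mathcal{G}$. What your route buys is that surjectivity, injectivity, and the norm bound on the inverse all come for free from the Neumann series, with no separate convergence-in-$C_0$ or injectivity arguments; what the paper's route buys is an explicit formula, which it then reuses in the partially hyperbolic remark where the splitting is no longer $T$-coinvertible direction by direction. One point deserves care but is not a defect relative to the paper: your worry that condition (3) alone does not control $\|I_k^{-1}\|$ on $E^u$ is legitimate, and the paper in fact silently assumes the bound $|A_k^{-1}(w)|_{x_k}\le\lambda|w|_{x_{k+1}}$ for $w\in E^u(x_{k+1})$, which requires exactly the strengthening you propose. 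Just note that an $I_k$ that is an isometry on both subbundles need not literally satisfy (3) on all of $T_{f(x_{k-1})}\widetilde M$ when the angle between $E^s$ and $E^u$ varies; what both proofs actually need is only a uniform bound on $\|I_k\|$ and on $\|I_k^{-1}|_{E^u}\|$ (absorbing the resulting constants as the paper absorbs $N$), so you should state the hypothesis on $I_k$ in that form rather than claim compatibility with (3) verbatim.
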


Using this theorem we can define the map $G:C_0\to C_0$ by $$G(\bar{v})=(Id-T)^{-1}\circ(F-T)(\bar{v}).$$ By definition $G$ and $F$ have the same fixed points in $C_0$, so Theorem \ref{teorema homeo} is enough to prove Theorem \ref{.}.

The map $(Id-T)^{-1}$ will be defined in the proof of Theorem \ref{bolado}. The map $F-T$ is the following: $$(F-T)(\bar{v})_k=\exp^{-1}_{x_k}\circ f\circ\exp_{x_{k-1}}(v_{k-1})-I_k\circ Df(x_{k-1})(v_{k-1}), \,\,\,\,\,\, k\in\Z.$$ We do not know how to obtain fixed points for this map in the general case but we do when $f$ is linear or when the numbers $d(f(x_k),x_{k+1})$ are sufficiently small. In the first case $G$ is a constant map and in the second case $G$ is a contraction in an invariant small neighborhood of $\bar{0}$ in $C_0$ (see \cite{P}). We hope some fixed point Theorem applies in the general case.

\section{Proof of Theorem \ref{bolado}}

Now we turn our attention to the proof of Theorem \ref{bolado}. For each $k\in\Z$ we consider projections $\pi^s_k:T_{x_k}\widetilde{M}\to E^s(x_k)$ and $\pi^u_k:T_{x_k}\widetilde{M}\to E^u(x_k)$ parallel to $E^u(x_k)$ and $E^s(x_k)$ respectively. Since $M$ is compact we can choose $N\in\N$ such that for every $k\in\Z$ we have $$|\pi^s_k(v)|_{x_k}\leq N|v|_{x_k} \,\,\,\,\,\, \textrm{and} \,\,\,\,\,\, |\pi^u_k(v)|_{x_k}\leq N|v|_{x_k}.$$ For each $k\in\Z$ consider the map $A_k:T_{x_k}\widetilde{M}\to T_{x_{k+1}}\widetilde{M}$ defined by $$A_k(v)=I_{k+1}\circ Df(x_k)(v).$$ Since $A_k(E^s(x_k))=E^s(x_{k+1})$ for every $k\in\Z$ we can compose these maps to obtain $$A_{k-1}\circ\dots\circ A_n(E^s(x_n))=E^s(x_k), \,\,\,\,\,\, n<k.$$ Note also that $A_k^{-1}(E^u(x_{k+1}))=E^u(x_k)$ so we analogously have $$A_k^{-1}\circ\dots\circ A_n^{-1}(E^u(x_{n+1}))=E^u(x_k), \,\,\,\,\,\, n\geq k.$$ Thus we can define a map $\mathcal{G}:C_0\to C_0$ as follows: for each $\bar{v}=(v_k)_{k\in\Z}\in C_0$ the sequence $\mathcal{G}(\bar{v})=(\mathcal{G}(\bar{v})_k)_{k\in\Z}$ is defined by $$\mathcal{G}(\bar{v})_k=\pi^s_k(v_k)+\sum_{n=-\infty}^{k-1}A_{k-1}\circ\dots\circ A_n\pi^s_n(v_n)-\sum_{n=k}^{+\infty}A_k^{-1}\circ\dots\circ A_n^{-1}\circ\pi^u_{n+1}(v_{n+1}).$$ We will prove that $\mathcal{G}$ is the inverse of the map $Id-T$. First note that for every $v\in E^s(x_k)$, $w\in E^u(x_{k+1})$ and $k\in\Z$ we have $$|A_k(v)|_{x_{k+1}}\leq\lambda|v|_{x_k} \,\,\,\,\,\, \textrm{and} \,\,\,\,\,\, |A_k^{-1}(w)|_{x_k}\leq\lambda|w|_{x_{k+1}}.$$ Thus for every $v\in E^s(x_n)$ and $n<k$ we have $$|A_{k-1}\circ\dots\circ A_n(v)|_{x_k}\leq\lambda^{k-n}|v|_{x_n}.$$ Also for every $w\in E^u(x_{n+1})$ and $n\geq k$ we have $$|A_k^{-1}\circ\dots\circ A_n^{-1}(w)|_{x_k}\leq\lambda^{n-k+1}|w|_{x_{n+1}}.$$ Therefore \begin{eqnarray*}\left|\sum_{n=-\infty}^{k-1}A_{k-1}\circ\dots\circ A_n\pi^s_n(v_n)\right|_{x_k}&\leq&\sum_{n=-\infty}^{k-1}\lambda^{k-n}|\pi^s_n(v_n)|_{x_n}\\
&\leq& N\sum_{n=-\infty}^{k-1}\lambda^{k-n}|v_n|_{x_n}\\
&\leq& N||\bar{v}||\sum_{n=-\infty}^{k-1}\lambda^{k-n}\\
&\leq& N||\bar{v}||\frac{\lambda}{1-\lambda}\end{eqnarray*} and \begin{eqnarray*}\left|\sum_{n=k}^{+\infty}A_k^{-1}\circ\dots\circ A_n^{-1}\circ\pi^u_{n+1}(v_{n+1})\right|_{x_k}&\leq&\sum_{n=k}^{+\infty}\lambda^{n-k+1}|\pi^u_{n+1}(v_{n+1})|_{x_{n+1}}\\
&\leq& N\sum_{n=k}^{+\infty}\lambda^{n-k+1}|v_{n+1}|_{x_{n+1}}\\
&\leq& N||\bar{v}||\sum_{n=k}^{+\infty}\lambda^{n-k+1}\\
&\leq& N||\bar{v}||\frac{\lambda}{1-\lambda}.\end{eqnarray*}

Thus for every $k\in\Z$ we have $$|\mathcal{G}(\bar{v})_k|_{x_k}\leq N||\bar{v}||+N||\bar{v}||\frac{\lambda}{1-\lambda}+N||\bar{v}||\frac{\lambda}{1-\lambda}=N\left(\frac{1+\lambda}{1-\lambda}\right)||\bar{v}||.$$ This proves that $\mathcal{G}(\bar{v})_k\in T_{x_k}\widetilde{M}$ for every $k\in\Z$. We prove more:

\begin{proposition}\label{c0}
If $\bar{v}\in C_0$ then $\mathcal{G}(\bar{v})\in C_0$.
\end{proposition}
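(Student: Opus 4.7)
The plan is to show that each of the three pieces in the definition of $\mathcal{G}(\bar{v})_k$ tends to zero in norm as $|k|\to\infty$. The first piece is immediate: $|\pi^s_k(v_k)|_{x_k}\leq N|v_k|_{x_k}\to 0$ since $\bar{v}\in C_0$. The stable sum $S_k=\sum_{n=-\infty}^{k-1}A_{k-1}\circ\cdots\circ A_n\pi^s_n(v_n)$ and the unstable sum $U_k=\sum_{n=k}^{+\infty}A_k^{-1}\circ\cdots\circ A_n^{-1}\pi^u_{n+1}(v_{n+1})$ have symmetric structure under time reversal and exchange of stable and unstable bundles, so I would treat $S_k$ in detail and only indicate that $U_k$ follows by the mirror argument.

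The two essential ingredients are the geometric bound $|A_{k-1}\circ\cdots\circ A_n(v)|_{x_k}\leq\lambda^{k-n}|v|_{x_n}$ on stable vectors and the projection bound $|\pi^s_n(v_n)|_{x_n}\leq N|v_n|_{x_n}$, both already recorded in the excerpt. Fix $\eps>0$ and choose $M\in\N$ so that $|v_n|_{x_n}<\eps$ whenever $|n|\geq M$. For $k\leq -M$ every index $n\leq k-1$ already lies in the regime $|v_n|<\eps$, so $|S_k|_{x_k}\leq N\eps\sum_{n=-\infty}^{k-1}\lambda^{k-n}=N\eps\lambda/(1-\lambda)$, which disposes of the limit $k\to -\infty$.

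For $k\to +\infty$ I would split $\sum_{n=-\infty}^{k-1}$ into three ranges: the far past $n\leq -M$, the bounded middle $-M<n<M$, and the near past $M\leq n\leq k-1$. The far and near pieces use $|v_n|<\eps$ together with a geometric tail to give bounds of order $N\eps\lambda/(1-\lambda)$; the middle contains at most $2M$ terms, each bounded by $N||\bar{v}||\lambda^{k-n}\leq N||\bar{v}||\lambda^{k-M+1}$, which vanishes as $k\to +\infty$. The sum $U_k$ is controlled by the analogous three-piece decomposition using the unstable estimate $|A_k^{-1}\circ\cdots\circ A_n^{-1}(w)|_{x_k}\leq\lambda^{n-k+1}|w|_{x_{n+1}}$. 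The one subtle point, which I would expect to be the main obstacle (such as it is), is the $k\to +\infty$ case of $S_k$: the summation variable $n$ unavoidably visits middle indices where $|v_n|$ is not yet small, and one must lean on the exponential factor $\lambda^{k-n}$ to kill their total contribution as $k$ grows. Otherwise the argument is standard $\eps$-bookkeeping built directly on the estimates already collected.
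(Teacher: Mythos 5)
Your proposal is correct and follows essentially the same route as the paper: bound the projection term directly, and control the stable sum by splitting off the finitely many ``middle'' indices (killed by the factor $\lambda^{k-n}$) from the tail indices where $|v_n|_{x_n}$ is already small, with the unstable sum handled by the mirror argument. The paper packages the $k\to+\infty$ splitting as a separate auxiliary lemma (Lemma \ref{2}) and uses a two-piece rather than three-piece decomposition, but this is only a bookkeeping difference.
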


First we prove an auxiliary lemma:

\begin{lemma}\label{2}
If $\bar{v}=(v_k)_{k\in\Z}\in C_0$ then $\sum_{n=1}^{k-1}\lambda^{k-n}|v_n|_{x_n}\to0$ when $k\to\infty$.
\end{lemma}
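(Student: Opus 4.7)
The plan is to split the sum into a ``head'' part (indices $n$ small relative to $k$) and a ``tail'' part (indices $n$ close to $k$), and estimate each separately using the two available facts about $\bar{v}$: global boundedness in $B$-norm, and decay of $|v_n|_{x_n}$ as $|n|\to\infty$.

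Fix $\eps>0$. First I would use that $\bar{v}\in C_0$ to choose $N\in\N$ with $|v_n|_{x_n}<\frac{(1-\la)\eps}{2\la}$ for all $n\geq N$. For $k>N$ I split
$$\sum_{n=1}^{k-1}\la^{k-n}|v_n|_{x_n}=\sum_{n=1}^{N-1}\la^{k-n}|v_n|_{x_n}+\sum_{n=N}^{k-1}\la^{k-n}|v_n|_{x_n}.$$
The second (tail) sum is bounded by $\frac{(1-\la)\eps}{2\la}\sum_{n=N}^{k-1}\la^{k-n}\leq\frac{(1-\la)\eps}{2\la}\cdot\frac{\la}{1-\la}=\frac{\eps}{2}$, uniformly in $k$.

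For the first (head) sum, I use the bound $|v_n|_{x_n}\leq\|\bar v\|$ and the observation that for $n\in\{1,\dots,N-1\}$ we have $k-n\geq k-N+1$, so $\la^{k-n}\leq\la^{k-N+1}$. Hence the head is bounded by $(N-1)\|\bar v\|\la^{k-N+1}$, which tends to $0$ as $k\to\infty$ since $0<\la<1$ and $N$ is fixed. Therefore there exists $K\geq N$ such that for all $k\geq K$ the head is at most $\eps/2$, giving a total bound of $\eps$.

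I do not expect any real obstacle here: the argument is the standard ``split the convolution of a decaying geometric kernel with a null sequence'' estimate, and the only mild subtlety is to choose the threshold in the tail small enough that the geometric factor $\la/(1-\la)$ is absorbed, which is achieved by the factor $(1-\la)/(2\la)$ above.
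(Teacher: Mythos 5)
Your proof is correct and follows essentially the same route as the paper: split the sum at a threshold determined by the decay of $|v_n|_{x_n}$, absorb the geometric factor into the tail via the choice of $\frac{(1-\la)\eps}{2\la}$, and let the finitely many head terms vanish as $k\to\infty$ using boundedness of $\bar v$. The only differences are cosmetic (the paper factors $\la^k$ out of the head sum rather than bounding it by $(N-1)\|\bar v\|\la^{k-N+1}$, and uses a slightly different threshold constant), so nothing further is needed.
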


\begin{proof}
For each $\eps>0$ choose $K\in\N$ such that $|v_n|_{x_n}<\frac{\eps(1-\lambda)}{2}$ for every $n\geq K$. For $k>K$ we write $$\sum_{n=1}^{k-1}\lambda^{k-n}|v_n|_{x_n}=\sum_{n=1}^{K}\lambda^{k-n}|v_n|_{x_n}+\sum_{n=K+1}^{k-1}\lambda^{k-n}|v_n|_{x_n}.$$ Note that $$\sum_{n=1}^K\lambda^{k-n}|v_n|_{x_n}=\lambda^k\sum_{n=1}^K\lambda^{-n}|v_n|_{x_n}\to0, \,\,\,\,\,\, k\to\infty.$$ So we can choose $k\geq K$ such that $$\sum_{n=1}^{K}\lambda^{k-n}|v_n|_{x_n}<\frac{\eps}{2}.$$ Moreover, $$\sum_{n=K+1}^{k-1}\lambda^{k-n}|v_n|_{x_n}\leq\frac{\eps(1-\lambda)}{2}\sum_{n=K+1}^{k-1}\lambda^{k-n}\leq\frac{\eps(1-\lambda)}{2}\frac{1}{1-\lambda}=\frac{\eps}{2}.$$
Thus for each $\eps>0$ there is $k\in\N$ such that $$\sum_{i=1}^{n-1}\lambda^{n-i}|v_i|_{x_i}<\eps, \,\,\, n\geq k.$$
This finishes the proof.
\end{proof}

\hspace{-0.4cm}\textbf{Proof of Proposition \ref{c0}:}
Let $\bar{v}=(v_k)_{k\in\Z}\in C_0$. To prove that $\mathcal{G}(\bar{v})\in C_0$ we need to show that $|\mathcal{G}(\bar{v})_k|_{x_k}\to0$ when $|k|\to\infty$. To prove this we consider separately the three terms in the expression of $\mathcal{G}(\bar{v})_k$. The first one satisfies $$|\pi^s_k(v_k)|_{x_k}\leq N|v_k|_{x_k}\to0, \,\,\,\,\,\, |k|\to\infty.$$ For the second term it is enough to prove that $$\sum_{n=-\infty}^{k-1}\lambda^{k-n}|v_n|_{x_n}\to0, \,\,\,\,\,\, |k|\to\infty.$$ For $k>1$ we write $$\sum_{n=-\infty}^{k-1}\lambda^{k-n}|v_n|_{x_n}=\sum_{n=-\infty}^0\lambda^{k-n}|v_n|_{x_n}+\sum_{n=1}^{k-1}\lambda^{k-n}|v_n|_{x_n}.$$ The previous lemma shows that the second sum goes to zero when $k\to\infty$. For the first sum we have \begin{eqnarray*}\sum_{n=-\infty}^0\lambda^{k-n}|v_n|_{x_n}&=&\lambda^k\sum_{n=-\infty}^0\lambda^{-n}|v_n|_{x_n}\\
&\leq&\lambda^k||\bar{v}||\sum_{n=-\infty}^0\lambda^{-n}\\
&=&\lambda^k||\bar{v}||\frac{1}{1-\lambda}\to0, \,\,\,\,\,\, k\to\infty.\end{eqnarray*} For $k<0$ we argument as follows: for each $\eps>0$ we choose $K\in\N$ such that $|v_n|_{x_n}<\frac{\eps(1-\lambda)}{\lambda}$ for each $n\leq-K$. Thus for $k\leq-K$ we have $$\sum_{n=-\infty}^{k-1}\lambda^{k-n}|v_n|_{x_n}\leq\frac{\eps(1-\lambda)}{\lambda}\sum_{n=-\infty}^{k-1}\lambda^{k-n}=\frac{\eps(1-\lambda)}{\lambda}\frac{\lambda}{1-\lambda}=\eps.$$

For the third term in $\mathcal{G}(\bar{v})_k$ we can use the same arguments so we leave the details to the reader. \qed

\begin{lemma}\label{imp}
For each $\bar{v}\in C_0$ and each $k\in\Z$ the following holds: $$A_k(\mathcal{G}(\bar{v})_k)=\mathcal{G}(\bar{v})_{k+1}-v_{k+1}.$$
\end{lemma}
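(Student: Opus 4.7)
The plan is to verify the identity by direct computation, applying $A_k$ term-by-term to the three pieces in the definition of $\mathcal{G}(\bar{v})_k$ and then re-indexing. The whole thing is essentially a telescoping/shift-of-index exercise; no estimates are needed, since convergence of all series involved was already established in Proposition \ref{c0} and the discussion preceding it.

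First I would apply $A_k$ to $\mathcal{G}(\bar{v})_k$, using linearity, to obtain three terms:
\begin{enumerate}
\item $A_k \pi^s_k(v_k)$,
\item $\displaystyle\sum_{n=-\infty}^{k-1} A_k \circ A_{k-1} \circ \dots \circ A_n \pi^s_n(v_n)$,
\item $\displaystyle-\sum_{n=k}^{+\infty} A_k \circ A_k^{-1} \circ \dots \circ A_n^{-1} \pi^u_{n+1}(v_{n+1})$.
\end{enumerate}
The first term is exactly the $n=k$ summand of the stable sum appearing in $\mathcal{G}(\bar{v})_{k+1}$, so combining (1) and (2) yields
$$\sum_{n=-\infty}^{k} A_k \circ A_{k-1} \circ \dots \circ A_n \pi^s_n(v_n).$$
For the third term I would isolate the summand $n=k$: since $A_k \circ A_k^{-1} = \mathrm{Id}$, this summand equals $-\pi^u_{k+1}(v_{k+1})$; while for $n \geq k+1$ the cancellation $A_k \circ A_k^{-1} = \mathrm{Id}$ leaves $-A_{k+1}^{-1}\circ\dots\circ A_n^{-1}\pi^u_{n+1}(v_{n+1})$. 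Reassembling gives
$$A_k(\mathcal{G}(\bar{v})_k) = \sum_{n=-\infty}^{k} A_k\circ\dots\circ A_n \pi^s_n(v_n) - \pi^u_{k+1}(v_{k+1}) - \sum_{n=k+1}^{+\infty} A_{k+1}^{-1}\circ\dots\circ A_n^{-1}\pi^u_{n+1}(v_{n+1}).$$

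Next I would write down $\mathcal{G}(\bar{v})_{k+1}$ from the definition, subtract $v_{k+1}$, and use the splitting $v_{k+1} = \pi^s_{k+1}(v_{k+1}) + \pi^u_{k+1}(v_{k+1})$ (which holds because $\pi^s_{k+1}$ and $\pi^u_{k+1}$ are the projections onto $E^s(x_{k+1})$ and $E^u(x_{k+1})$ along the complementary subspaces, hence $\pi^s_{k+1}+\pi^u_{k+1}=\mathrm{Id}$). The $\pi^s_{k+1}(v_{k+1})$ term cancels, leaving exactly the expression derived above for $A_k(\mathcal{G}(\bar{v})_k)$.

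There is no substantive obstacle: the identity is forced by the design of $\mathcal{G}$, and the proof is essentially bookkeeping with indices plus the trivial facts $A_k \circ A_k^{-1}=\mathrm{Id}$ and $\pi^s_{k+1}+\pi^u_{k+1}=\mathrm{Id}$. The only thing to be careful about is the rearrangement of infinite sums when $A_k$ is brought inside, but this is justified because each of the two series converges absolutely in $T_{x_k}\widetilde{M}$ (as shown by the geometric bounds $|A_{k-1}\circ\dots\circ A_n(v)|\leq\lambda^{k-n}|v|$ on $E^s$ and the analogous bound on $E^u$ used earlier), so applying the continuous linear map $A_k$ commutes with summation.
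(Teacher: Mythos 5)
Your proof is correct and follows essentially the same route as the paper: apply $A_k$ termwise, absorb $A_k\pi^s_k(v_k)$ as the $n=k$ summand of the stable sum for $\mathcal{G}(\bar v)_{k+1}$, peel off the $n=k$ term $-\pi^u_{k+1}(v_{k+1})$ from the unstable sum, and finish with $-\pi^u_{k+1}(v_{k+1})=\pi^s_{k+1}(v_{k+1})-v_{k+1}$. The only difference is that you explicitly justify commuting $A_k$ with the infinite sums via absolute convergence, a point the paper leaves implicit.
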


\begin{proof}
For each $k\in\Z$ we have \begin{eqnarray*}A_k(\mathcal{G}(\bar{v})_k)=A_k\circ\pi^s_k(v_k)&+&\sum_{n=-\infty}^{k-1}A_k\circ A_{k-1}\circ\dots\circ A_n\pi^s_n(v_n)-\pi^u_{k+1}(v_{k+1})\\
&-&\sum_{n=k+1}^{+\infty}A_{k+1}^{-1}\circ\dots\circ\pi^u_{n+1}(v_{n+1}).\end{eqnarray*}
To obtain the desired equality just put $-\pi^u_{k+1}(v_{k+1})=\pi^s_{k+1}(v_{k+1}) - v_{k+1}$ in the last one.
\end{proof}

\hspace{-0.4cm}\textbf{Proof of Theorem \ref{bolado}:}
We first prove that $Id-T$ is surjective. Indeed, $\mathcal{G}$ is a right inverse for $Id-T$. For every $w\in C_0$ and $k\in\Z$ we have: $$(Id-T)(\mathcal{G}(w))_k=\mathcal{G}(w)_k-A_{k-1}(\mathcal{G}(w)_{k-1})=w_k,$$ where the last equality is ensured by Lemma \ref{imp}. To prove that $Id-T$ is injective let $v\in C_0$ be such that $(Id-T)(v)=0$, that is, $T(v)=v$. Then $A_{k-1}(v_{k-1})=v_k$ for every $k\in\Z$. By induction we obtain $$v_k=A_{k-1}\circ\dots\circ A_0(v_0), \,\,\,\,\,\, k>0$$ and $$v_k=A_k^{-1}\circ\dots\circ A_{-1}^{-1}(v_0), \,\,\,\,\,\, k<0.$$ Now note that $$|v_k|_{x_k}\to0, \,\,\,\,\,\, |k|\to\infty,$$ and that this implies $$\pi^s_0(v_0)=\pi^u_0(v_0)=0,$$ otherwise $v_k$ would converge to $\infty$. Therefore $v_0=0$, which implies $v_k=0$ for every $k\in\Z$ and we are done.   \qed

\begin{remark}
The map $\mathcal{G}$ is a linear isomorphism in $C_0$, that is the inverse of $Id-T$ and has norm $$||\mathcal{G}||=\sup_{||\bar{v}||=1}||\mathcal{G}(\bar{v})||=\sup_{||\bar{v}||=1}\sup_{k\in\Z}|\mathcal{G}(\bar{v})_k|_{x_k}\leq N\frac{1+\lambda}{1-\lambda}.$$
\end{remark}

\begin{remark}
If $f$ is a partially hyperbolic diffeomorphism then we can consider the same operator $\mathcal{G}$ as defined above. The difference is that Lemma \ref{imp} does not hold as it is written. Indeed, the following holds: for each $\bar{v}\in C_0$ and each $k\in\Z$ we have $$A_k(\mathcal{G}(\bar{v})_k)=\mathcal{G}(\bar{v})_{k+1}-v_{k+1}+\pi^c_{k+1}(v_{k+1}),$$ where $\pi^c_k$ is the projection in the central direction $E^c(x_k)$ parallel to $E^s(x_k)\oplus E^u(x_k)$. In this case we have $$(Id-T)(\mathcal{G}(w))_k=\mathcal{G}(w)_k-A_{k-1}(\mathcal{G}(w)_{k-1})=w_k -\pi^c_k(w_k)$$ and $\mathcal{G}$ is not the inverse of $Id-T$ anymore.
\end{remark}

\begin{problem}An interesting problem is to understand how these techniques translate to the theory of Anosov flows. Are there a Banach space and a map on this space such that an analogous of Theorem \ref{.} holds? There are some examples of Anosov flows that are not product Anosov flows (see \cite{B}, \cite{F}) so if we answer this question affirmatively this map should not admit fixed points. 
\end{problem}



\end{document}